\newcommand{\norm}[1]{\left \lVert#1\right \rVert}
\newtheorem{thm}{Theorem}[section]
\newtheorem{lem}{Lemma}[section]
\newtheorem{assumption}{Assumption}[section]
\theoremstyle{definition}
\newtheorem{defn}{Definition}[section]
\theoremstyle{remark}
\definecolor{cucol}{rgb}{0,0,0.8}
\definecolor{afcol}{rgb}{1,0,0}
\numberwithin{equation}{section}
\begin{document}
	
	%	\begin{frontmatter}
	
	\title{Picard approximation of a singular backward stochastic nonlinear Volterra integral equation}
	
	%%this line removes the date, but space is still left for it;
	%if used, remove the \vspace{-1cm}
	\date{}
	
	%this gives the date in the form Mon 30 Jan 2012, 8:57pm;
	%if used, retain the \vspace{-1cm}
	%\date{\shortdayofweekname{\day}{\month}{\year}{ }\mydate\today}

	%\author[]{}
    %\ead{arzu.ahmadova@emu.edu.tr}
    \author[1,2]{Arzu Ahmadova \thanks{Corresponding author. Email: \texttt{arzu.ahmadova@uni-due.de}}}
	\author[2]{Nazim I. Mahmudov\thanks{ Email: \texttt{nazim.mahmudov@emu.edu.tr}}}
	%	\ead{nazim.mahmudov@emu.edu.tr}
	%\cortext[cor1]{Corresponding author}
	\affil[1]{Faculty of Mathematics, University of Duisburg-Essen, 45127, Essen, Germany}
	
	\affil[2]{Department of Mathematics, Eastern Mediterranean University, Mersin 10, 99628, T.R. North Cyprus}
	
	% Latex won't make the title unless told:
	
	\maketitle
	
	%%to remove the space left for date, use:

	\begin{abstract}
		\noindent Backward stochastic differential equations (BSDEs) belong nowadays to the most frequently studied equations in stochastic analysis and computational stochastics. In this paper we prove that Picard iterations of BSDEs with globally Lipschitz continuous nonlinearities converge exponentially fast to the solution. Our main result in this paper is to establish a fundamental lemma to prove the global existence and uniqueness of an adapted solution to a singular backward stochastic nonlinear Volterra integral equation (for short singular BSVIE) of order $\alpha \in (\frac{1}{2},1)$ under a weaker condition than Lipschitz one in Hilbert space.
		
		\textit{Keywords:}
		Singular backward stochastic equations, backward stochastic nonlinear Volterra integral equation, existence and uniqueness, Picard iteration, adapted process, Carath\'{e}odory conditions
	\end{abstract}
	%	\end{keyword}  
	
	%	\end{frontmatter}
	\section{Introduction}\label{Sec:intro}

The study of backward stochastic differential equations (BSDEs) has necessary applications in stochastic optimal control, stochastic differential games, the probabilistic formula for the solutions of quasilinear partial differential equations, and financial markets. The adapted solution for a linear BSDE arising as an adjoint process for a stochastic control problem was first studied by Bismut \cite{bismut} in 1973, then by Bensousssan \cite{bensoussan}, and while Pardoux and Peng \cite{pardoux-peng-1990} first studied the result for the existence and uniqueness of an adapted solution for a continuous general nonlinear BSDE, which is a final value problem for a stochastic differential equation of It\^{o} type under the uniform Lipschitz conditions of the following form:
	\begin{align*}
	\begin{cases*}
	\mathrm{d}Y(t)=h(t,Y(t),Z(t))\mathrm{d}t + Z(t)\mathrm{d}W(t), t \in [0,T],\\
	Y(T)=\xi.
	\end{cases*}
	\end{align*}
They proved the existence and uniqueness of an adapted solution by means of the Bihari's inequality, which is the most important generalization of the Gronwall-Bellman inequality.
Since then, the theory of BSDE became a powerful tool in many fields, such as financial mathematics, optimal control, semi-linear and quasi-linear partial differential equations.
Later, there have been many works devoted to the study of BSDEs and their applications in a series of papers \cite{pardoux-peng-1990,tang-li,pardoux-1999,peng-1993,peng-1992,hu,peng-1991,rong,tessitore,rong-1997,wang-yong} under the assumptions that the coefficients satisfy Lipschitz conditions. Moreover, Mao \cite{mao} obtained a more general result than that of Pardoux and Peng \cite{pardoux-peng-1990} in which he proved existence and uniqueness under mild assumptions by applying Bihari's inequality, which was the key tool in the proof.
	 
A few years later, Lin \cite{Lin} considered the following backward stochastic nonlinear Volterra integral equation.
	 \begin{equation*}
	 X(t)+\int_{t}^{T}f(t,s,X(s),Z(t,s))\mathrm{d}s+\int_{t}^{T}\left[ g(t,s,X(s))+Z(t,s)\right] \mathrm{d}W(s)=X.
	 \end{equation*}
	His goal in \cite{Lin} is to find a pair $\left\lbrace X(s),Z(t,s)\right\rbrace $ that requires that this pair $\left\lbrace \mathscr{F}_{t \vee s}\right\rbrace$-adapted and $Z(t,s)$ is related to $t$. This is the intersection point of our result on linear singular BSVIE with \cite{Lin} and differs from the case in \cite{pardoux-peng-1990,peng-1993,peng-1992,peng-1991,rong,mahmudov}. The author also defines $Z(t,s)=\tilde{Z}(t,s)-g(t,s)$, $(t,s)\in \mathcal{D}=\left\lbrace (t,s)\in \mathbb{R}^{2}_{+}; 0\leq t\leq s \leq T\right\rbrace $ as we defined for linear singular BSVIEs. Another intersection with \cite{Lin,yong,mahmudov,wang-yong} is the use of the well-known extended martingale representation theorem in which we consider the extended martingale representation to an adapted solution $\left\lbrace x(t), y(t,s) \right\rbrace $, $(t,s)\in \mathcal{D}$ also for linear singular BSVIEs.

Such types of equations introduced above have intersection points with our results when the following non-Lipschitz conditions are imposed on the functions $f$ and $g$:
 \begin{align*}
 &|f(t,x,y)-f(t,\bar{x},\bar{y})|^{2} \leq \kappa (|x-\bar{x}|^{2}) +c|y-\bar{y}|^{2},\quad \text{a.s.},\\
 &|g(t,x)-g(t,\bar{x})|^{2} \leq \kappa(|x-\bar{x}|^{2}), \text{a.s.},
 \end{align*}
 where $c>0$ and $\kappa$ is a concave increasing function from $\mathbb{R}_{+}$ to $\mathbb{R}_{+}$ such that $\kappa(0)=0$, $\kappa(u)>0$ for $u>0$ and 
 \begin{equation}
 \int_{0+}\frac{\mathrm{d}u}{\kappa(u)}=\infty.
 \end{equation}
 Since $\kappa$ is concave and $\kappa(0)=0$, there exist positive constants $a, b$ such that $\kappa(u)\leq a+bu$ for all $u\geq 0$.
 
 Another intersection in their work is an application of Bihari inequality to prove existence and uniqueness result of an adapted solution. Compared to their results, our result requires the following non-Lipschitz assumption for the functions $f$ and $g$:
  \begin{align*}
 &\|f(t,x,y)-f(t,\bar{x},\bar{y})\|^{2} \leq \rho (t,\|x-\bar{x}\|^{2}) +c\|y-\bar{y}\|^{2},\quad \text{a.s.},\\
 &\|g(t,x)-g(t,\bar{x})\|^{2} \leq \rho(t, \|x-\bar{x}\|^{2}), \text{a.s.},
 \end{align*}
% In \cite{wang-huang} the authors use the comparison theorem of following ordinary differential equation (ODE):
%\begin{equation*}
%\begin{cases*}
%u^{\prime}=-\rho(t,u),\\
%u(T)=0.
%\end{cases*}
%\end{equation*}
	Now let us briefly introduce some notations used throughout the article. First, let us recall some spaces. Let $(\mathcal{H}, \|\cdot\|)$ and $(U, \|\cdot\|)$ be real separable Hilbert spaces with inner product $\langle \cdot, \cdot\rangle$. Let $\mathcal{L}(U,\mathcal{H})$ be the space of bounded linear operators mapping from $U$ to $\mathcal{H}$ and $(\Omega,\mathscr{F}, \mathbb{F}, \mathbb{P})$ with natural filtration $\mathbb{F} \coloneqq \left\lbrace \mathscr{F}_{t}\right\rbrace_{t \geq 0}$ satisfying \textit{usual conditions} is a complete probability space. $(w(t))_{t\geq 0} $ is a $\mathcal{Q}$-Wiener process on $(\Omega,\mathscr{F}, \mathbb{F}, \mathbb{P})$ with a linear covariance bounded operator $\mathcal{Q} \in \mathcal{L}(U)$ such that $\text{tr}\mathcal{Q}<\infty$. Furthermore, suppose that there exists a complete orthonormal system $\left\lbrace e_{k}\right\rbrace_{k\geq 1} $ in $U$, a bounded sequence of nonnegative real numbers $\lambda_{k}$, such that $\mathcal{Q}e_{k}=\lambda_{k}e_{k}$, $k=1,2,\ldots$ and a sequence $\left\lbrace \beta_{k}\right\rbrace _{k\geq 1}$of independent Brownian motions such that
	\begin{equation*}
		\langle w(t), e\rangle_{U}= \sum_{k=1}^{\infty}\sqrt{\lambda_{k}}\langle e_{k},e \rangle_{U}\beta_{k}(t), \quad e \in U, \quad t \geq 0.
	\end{equation*}
In addition, let $\mathcal{L}^{0}_{2}=\mathcal{L}_{2}(\mathcal{Q}^{1/2}U,\mathcal{H})$ be the space of Hilbert-Schmidt operators from $\mathcal{Q}^{1/2}U$ to $\mathcal{H}$ with the inner product $\|\varphi\|^{2}_{\mathcal{L}^{0}_{2}}= \text{tr}[\varphi \mathcal{Q} \varphi^{\ast}] <\infty$, $\varphi\in \mathcal{L}(U,\mathcal{H})$. We also consider that $\mathcal{L}_{2}^{\mathscr{F}_{t}}(\Omega,\mathcal{H})$ is the Hilbert space $\mathcal{H}$-valued, $\mathscr{F}_{t}$-measurable and square-integrable random variables $\xi$, i.e. $\textbf{E}\|\xi\|^{2}_{\mathcal{H}} <\infty$. Also, $\mathcal{L}_{2}(\Omega,\mathscr{F}_{t},\mathcal{H})$ is the Hilbert space of all $\mathscr{F}_{t}$-measurable square integrable variables with values in a Hilbert space $\mathcal{H}$. $\mathcal{L}_{2}^{\mathscr{F}_{t}}([0,T],\mathcal{H})$ is the Hilbert space of all
square integrable and $\mathscr{F}_{t}$-adapted processes with values in $\mathcal{H}$. We also consider that $\mathcal{L}_{2}^{\mathscr{F}_{t}}(\Omega,\mathcal{H})$ is the Hilbert space $\mathcal{H}$-valued, $\mathscr{F}_{t}$-measurable and square-integrable random variables $\xi$, i.e. $\textbf{E}\|\xi\|^{2}_{\mathcal{H}} <\infty$.
 
	To the best of our knowledge, we study singular backward stochastic nonlinear Volterra integral equation which is an unaddressed topic in the previous literature (singular BSVIE, for short) of order $\alpha\in (\frac{1}{2},1)$ on $[0,T]$ as follows:
	\begin{align}\label{fbsde}
	x(t)=\xi &+\int_{t}^{T}(s-t)^{\alpha-1}f(t,s,x(s),y(t,s))\mathrm{d}s\nonumber\\
	&+\int_{t}^{T}(s-t)^{\alpha-1}\left[g(t,s,x(s))+y(t,s) \right] \mathrm{d}w(s), \qquad \text{P-a.s}.
	\end{align}
where $f: [0,T]\times \mathcal{H} \times  \mathcal{L}^{0}_{2}\to \mathcal{H}$ and $g: \mathcal{D}\times \mathcal{H} \to \mathcal{L}^{0}_{2}$ are assumed to be measurable mappings and terminal value $\xi \in \mathcal{L}_{2}(\Omega, \mathscr{F}_{T},\mathcal{H})$ is an $\mathscr{F}_{T}$-measurable square integrable variables with values in $\mathcal{H}$ such that $\textbf{E}\|\xi\|^{2}< \infty$. Let $x(t,\omega)=x(t)$ be a stochastic process on $ [0,T]$ and $\omega\in \Omega$.
		The expectation operator $\textbf{E}$ is denoted by
		\begin{equation*}
		\left( \textbf{E}x\right)(t)\coloneqq \int_{\Omega}x(t)\mathbb{P}(\mathrm{d}\omega), \quad t \in [0,T].
		\end{equation*}
	 
	 Our goal in this paper is to search for a pair of stochastic processes $\left\lbrace x(t),y(t,s); (t,s)\in \mathcal{D}\right\rbrace $, which we require to be $\mathscr{F}_{t\vee s}$ -adapted and satisfy \eqref{fbsde} in the usual sense of It\^{o}. Such a pair is called an adapted solution of the equation \eqref{fbsde}. Our main result will be an existence and uniqueness result for a matched an adapted pair $\left\lbrace x(t),y(t,s); (t,s)\in \mathcal{D}\right\rbrace $ that solves \eqref{fbsde}. We first derive representations of the adapted solution and then study existence and uniqueness results under a weaker condition than the Lipschitz condition. Unlike other research discussed above, we apply the Carath\'{e}odory- type condition to prove the existence and uniqueness of the adapted solution of Eq. \eqref{fbsde}.

Hence the plan of this work is as follows. In section \ref{stochastic}, we establish a fundamental lemma that will play a key role in this paper. Section \ref{approximation} is devoted to the construction of a Picard-type approximation of the adapted solution to show existence and uniqueness under non-Lipschitz conditions using the Bihari inequality, and section \ref{open problems} is devoted to the conclusion.

To conclude the introductory section, we introduce the following definition, which will be used throughout this work.
	\begin{defn} 
		For any $t \in [0,T]$, we define 	$M[t,T]$  to be a Banach space
		\begin{equation*}
		M[t,T]\coloneqq \mathcal{L}_{2}^{\mathscr{F}}(\Omega,C([t,T],\mathcal{H}))\times \mathcal{L}_{2}^{\mathscr{F}}(\mathcal{D},\mathcal{L}_{2}^{0})
		\end{equation*}
		 endowed with the norm
		 \begin{equation*}
		 \|(x,y)\|^{2}_{t}=\textbf{E}\sup_{t\leq s\leq T}\|x(s)\|^{2}+\textbf{E}\int_{t}^{T}\int_{s}^{T}\|y(s,u)\|^{2}\mathrm{d}u\mathrm{d}s< \infty.
		 \end{equation*}
	\end{defn}

	 %For any given $0\leq t\leq T$, we denote by $\mathscr{M}^{2}(0,T;\mathbb{R}^{n})$ (accord. $\mathscr{M}^{2}(\mathcal{D};\mathbb{R}^{n\times m})$) the family of $\mathbb{R}^{n}$-valued (resp. $\mathbb{R}^{n\times m}$-valued )  $\mathscr{F}_{t\vee s}$-adapted processes which are measurable and square integrable on $\Omega\times [0,T]$ with respect to $\mathbb{P} \times \lambda$ where $\lambda$ denotes the Lebesgue measure on $[0,T]$.
	
	%Let us recall some spaces. We define:\\

	%$\mathbb{L}^{2}_{\mathscr{F}}(0,T;\mathbb{L}^{2}(\mathbb{R}^{n}))=\left\lbrace \xi(\cdot)\in \mathbb{L}^{2}(0,T;\mathbb{R}^{n}) \mid  \xi(t)\quad  \text{is}\quad  \mathbb{R}^{n}-\text{valued}\quad  \mathscr{F}_{T}-\text{adapted} \right\rbrace$;\\
	
	%$\mathbb{L}^{2}_{\mathscr{F}}(\mathcal{D};\mathbb{L}^{2}(\mathbb{R}^{n\times m}))=\left\lbrace \xi: \mathcal{D}\times \Omega  \to \mathbb{R}^{n\times m} \mid  \xi(\cdot)\quad  \text{is}\quad  \mathcal{B}([0,T]) \otimes \mathscr{F}_{T}-\text{measurable} \right\rbrace$ with $\mathcal{B}$ be the Borel $\sigma$-field of $[0,T]$.

\section{Fundamental lemma}\label{stochastic}

In this section, we establish a fundamental lemma to prove existence and uniqueness result using Picard type iteration in Section \ref{approximation}.

\begin{defn}
	A pair of adapted process $(x,y)\in M[t,T]$ is a mild solution of \eqref{fbsde} for all $t \in [0,T]$ if satisfies the backward stochastic nonlinear Volterra integral equation \eqref{fbsde}.
\end{defn}

We now introduce a fundamental lemma which plays an efficient role throughout this paper. To do so, we consider backward linear stochastic Volterra integral equation.

\begin{lem}\label{lem1}
For any $(x,y)\in M[t,T]$, the linear singular BSVIE
\begin{align}\label{3}
x(t)=\xi &+\int_{t}^{T}(s-t)^{\alpha-1}f(t,s)\mathrm{d}s\nonumber\\
&+\int_{t}^{T}(s-t)^{\alpha-1}\left[g(t,s)+y(t,s) \right] \mathrm{d}w(s), \quad \text{P-a.s.}
\end{align}
admits unique pair in $M[0,T]$ and moreover
\begin{align}\label{ineq1}
\textbf{E}\sup_{t\leq s\leq T}\|x(s)\|^{2}+\textbf{E} \int_{t}^{T}\int_{s}^{T}\|y(s,u)\|^{2}\mathrm{d}u\mathrm{d}s&\leq 8\textbf{E}\|\xi\|^{2}+16T\textbf{E}\|\xi\|^{2}\nonumber\\
&+ \frac{16(2T)^{2\alpha}}{2\alpha-1} \textbf{E}\int_{t}^{T}\|f(t,r)\|^{2}\mathrm{d}r\nonumber\\
&+2\textbf{E} \int_{t}^{T}\int_{s}^{T}\|g(s,u)\|^{2}\mathrm{d}u\mathrm{d}s.
\end{align}
\end{lem}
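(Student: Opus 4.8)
The plan is to solve \eqref{3} explicitly through the extended martingale representation theorem and then read the pair $(x,y)$ and the bound \eqref{ineq1} off that representation, treating $f(t,\cdot)$ and $g(t,\cdot)$ as given $\mathscr{F}$-adapted data. Fix $t\in[0,T]$ and put $\Xi(t):=\xi+\int_{t}^{T}(s-t)^{\alpha-1}f(t,s)\,\mathrm{d}s$. One first checks that $\Xi(t)\in\mathcal{L}_{2}(\Omega,\mathscr{F}_{T},\mathcal{H})$: the Bochner integral is controlled by the Cauchy--Schwarz inequality together with the bound $\int_{t}^{T}(s-t)^{2\alpha-2}\,\mathrm{d}s\le\frac{(2T)^{2\alpha-1}}{2\alpha-1}<\infty$, which is finite precisely because $\alpha>\tfrac12$. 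The martingale representation theorem for the $\mathcal{Q}$-Wiener process $w$ then produces a unique $\phi(t,\cdot)\in\mathcal{L}_{2}^{\mathscr{F}}([t,T],\mathcal{L}_{2}^{0})$ with $\Xi(t)=\textbf{E}[\Xi(t)\mid\mathscr{F}_{t}]+\int_{t}^{T}\phi(t,s)\,\mathrm{d}w(s)$, and I set
\begin{equation*}
x(t):=\textbf{E}[\Xi(t)\mid\mathscr{F}_{t}],\qquad y(t,s):=-(s-t)^{1-\alpha}\phi(t,s)-g(t,s).
\end{equation*}
Substituting back shows that this pair satisfies \eqref{3}; conversely, conditioning \eqref{3} on $\mathscr{F}_{t}$ forces $x(t)=\textbf{E}[\Xi(t)\mid\mathscr{F}_{t}]$, after which uniqueness of the integrand in the representation theorem forces $y$ to be as above, so existence and uniqueness in $M[0,T]$ come together. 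Along the way one must exhibit a jointly measurable, $\mathscr{F}_{t\vee s}$-adapted version of $(t,s)\mapsto(x(t),y(t,s))$ and a path-continuous modification of $t\mapsto x(t)$ (from mean-square continuity of $t\mapsto\Xi(t)$ and continuity of martingales for the Brownian filtration), so that $(x,y)$ genuinely lies in $M[0,T]$.

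For \eqref{ineq1} I would estimate the two halves of $\|(x,y)\|_{t}^{2}$ separately. Since $x(s)=\textbf{E}[\xi\mid\mathscr{F}_{s}]+\textbf{E}\big[\int_{s}^{T}(u-s)^{\alpha-1}f(s,u)\,\mathrm{d}u\mid\mathscr{F}_{s}\big]$, I dominate $\|x(s)\|$ pointwise by an $\mathscr{F}_{s}$-conditional expectation of a single $\mathscr{F}_{T}$-measurable square-integrable random variable, and then Doob's $L^{2}$ maximal inequality (whose factor $4$, combined with the elementary splittings, yields the numerical constants $8$ and $16$) bounds $\textbf{E}\sup_{t\le s\le T}\|x(s)\|^{2}$, the singular kernel being absorbed by Cauchy--Schwarz and the weight bound above and the $\xi$-term handled directly. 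For the second half, $\|y(s,u)\|^{2}\le2(u-s)^{2-2\alpha}\|\phi(s,u)\|^{2}+2\|g(s,u)\|^{2}$; the It\^o isometry gives $\textbf{E}\int_{s}^{T}\|\phi(s,u)\|^{2}\,\mathrm{d}u=\textbf{E}\|\Xi(s)\|^{2}-\textbf{E}\|\textbf{E}[\Xi(s)\mid\mathscr{F}_{s}]\|^{2}\le\textbf{E}\|\Xi(s)\|^{2}$, the quantity $\textbf{E}\|\Xi(s)\|^{2}$ is bounded as in the first step, the outer integration $\int_{t}^{T}(\cdot)\,\mathrm{d}s$ supplies the factor $T$ appearing in $16T\textbf{E}\|\xi\|^{2}$, and the $2\|g\|^{2}$ piece reproduces the last term of \eqref{ineq1} verbatim. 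Summing the two contributions and collecting constants gives \eqref{ineq1}.

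The step I expect to be the main obstacle is the $\sup_{t\le s\le T}\|x(s)\|^{2}$ term. Unlike in the classical BSDE situation, $s\mapsto x(s)$ is not a martingale --- each $x(s)$ is the conditional expectation of a distinct, $s$-dependent random variable $\Xi(s)$ --- so Doob's inequality cannot be applied to $x$ directly. The remedy is to split off the $\xi$-contribution and the $f$-contribution and, for each, to dominate $\|x(s)\|$ pointwise by a genuine $\mathbb{F}$-martingale evaluated at time $s$ (for instance $\textbf{E}[\|\xi\|\mid\mathscr{F}_{s}]$ for the first), to which Doob's $L^{2}$ inequality does apply; the path-continuity of $x$ is what makes the supremum measurable in the first place. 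The remaining ingredients --- the Cauchy--Schwarz inequality for the singular kernel, the It\^o isometry, and $\|a_{1}+\cdots+a_{4}\|^{2}\le4\sum_{i}\|a_{i}\|^{2}$ --- are routine, and the singularity $(s-t)^{\alpha-1}$ is harmless exactly because $\alpha>\tfrac12$ keeps $\int_{0}^{2T}r^{2\alpha-2}\,\mathrm{d}r$ finite.
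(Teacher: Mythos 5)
Your proposal is correct, and its analytic core coincides with the paper's: identify $x(t)$ by conditioning \eqref{3} on $\mathscr{F}_{t}$, produce the stochastic--integral term from the martingale representation theorem, and derive \eqref{ineq1} from Doob's $L^{2}$ maximal inequality applied (as you rightly emphasize) not to $s\mapsto x(s)$ itself but to a dominating genuine martingale such as $\textbf{E}\bigl\{\sup_{\tau}\int_{\tau}^{T}(r-\tau)^{\alpha-1}\|f(\tau,r)\|\,\mathrm{d}r\mid\mathscr{F}_{s}\bigr\}$, together with Cauchy--Schwarz against the kernel (integrable precisely because $\alpha>\tfrac12$) and the It\^{o} isometry. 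Where you genuinely diverge is in the existence construction: the paper invokes the representation theorem twice, once for $\xi$ (integrand $L$) and once for each $f(t,s)$ (integrand $K(s,\cdot)$), and must then perform a stochastic Fubini interchange to assemble $\tilde y(t,u)=-L(u)-\int_{u}^{T}(s-t)^{\alpha-1}K(s,u)\,\mathrm{d}s$; you invoke it once, on the aggregate $\Xi(t)=\xi+\int_{t}^{T}(s-t)^{\alpha-1}f(t,s)\,\mathrm{d}s$, which removes the Fubini step and also yields a sharper uniqueness argument via uniqueness of the representing integrand, at the cost of dividing by the kernel when defining $y$ --- harmless, since the resulting factor $(s-t)^{1-\alpha}$ is bounded for $\alpha<1$. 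This last point is actually a correction of the paper: your $y(t,s)=-(s-t)^{1-\alpha}\phi(t,s)-g(t,s)$ makes $(s-t)^{\alpha-1}[g(t,s)+y(t,s)]$ equal to the representing integrand exactly, whereas the paper's definition $y=\tilde y-g$ drops that kernel factor, so the pair it exhibits satisfies \eqref{3} only after the same adjustment. The one thing your sketch leaves unverified is the bookkeeping of the explicit constants $8$, $16T$ and $16(2T)^{2\alpha}/(2\alpha-1)$, but the ingredients you cite (Doob's factor $4$ and the elementary splittings) are exactly those from which the paper manufactures them.
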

\begin{proof}
	
\textit{Uniqueness:} Let $(x_{1},y_{1})$ and $(x_{2},y_{2})$ be two solutions of \eqref{3}.
\begin{align*}
x_{1}(t)-x_{2}(t)=\int_{t}^{T}(s-t)^{\alpha-1}\left[y_{1}(t,s)-y_{2}(t,s) \right] \mathrm{d}w(s),
\end{align*}
 Taking $\textbf{E}\left\lbrace \cdot \mid \mathscr{F}_{t} \right\rbrace $ from above, we can deduce that
 \begin{align*}
 \textbf{E}\left\lbrace x_{1}(t)-x_{2}(t)\mid \mathscr{F}_{t} \right\rbrace =0, \quad \forall t\in [0,T],
 \end{align*}
It is obvious that $x_{1}(t)=x_{2}(t)$ and this follows that $y_{1}(t,s)=y_{2}(t,s)$.

\textit{Existence:} Taking a conditional expectation from \eqref{3}, we have 

\begin{equation*}
x(t)=\textbf{E}\left\lbrace \xi \mid \mathscr{F}_{t}\right\rbrace+\int_{t}^{T}(s-t)^{\alpha-1}\textbf{E}\left\lbrace f(t,s) \mid \mathscr{F}_{t}\right\rbrace\mathrm{d}s.
\end{equation*}
From extended martingale representation theorem, there exists $L(\cdot)\in \mathcal{L}_{2}^{\mathscr{F}}([0,T],\mathcal{L}^{0}_{2})$ and uniquely $K(t,\cdot)\in  \mathcal{L}_{2}^{\mathscr{F}}(\mathcal{D};\mathcal{L}^{0}_{2})$ which satisfy the following relations:

\begin{equation}\label{L}
\textbf{E}\left\lbrace \xi \mid \mathscr{F}_{t}\right\rbrace=\textbf{E}\xi+\int_{0}^{t}L(u)\mathrm{d}w(u),
\end{equation}
\begin{equation}\label{K}
\textbf{E}\left\lbrace f(t,s) \mid  \mathscr{F}_{t}\right\rbrace=\textbf{E}f(t,s)+\int_{0}^{t}K(s,u)\mathrm{d}w(u).
\end{equation}
Note also from \eqref{K}, we can easily deduce that $\forall s \in [0,T]$ 
\begin{equation*}
K(s,u)=0, \quad \text{a.e.}, \quad u\in [s,T], \quad \text{a.s.}
\end{equation*}
 and that
\begin{equation}
\textbf{E}\int_{0}^{T}\int_{0}^{s}|K(s,u)|^{2}\mathrm{d}u\mathrm{d}s\leq 4\textbf{E}\int_{0}^{T}|f(t,s)|^{2}\mathrm{d}s.
\end{equation}
Since $t\in [0.T]$, it is obvious that
\begin{align*}
\xi&=\textbf{E}\xi +\int_{0}^{T}L(u)\mathrm{d}w(u)\\
&=\textbf{E}\xi +\int_{0}^{t}L(u)\mathrm{d}w(u)+\int_{t}^{T}L(u)\mathrm{d}w(u)\\
&=\textbf{E}\left\lbrace \xi \mid \mathscr{F}_{t}\right\rbrace +\int_{t}^{T}L(u)\mathrm{d}w(u),
\end{align*}
and since $s\geq t$, we have 
\begin{align*}
f(t,s)&=\textbf{E}f(t,s)+\int_{0}^{s}K(s,u)\mathrm{d}w(u)\\
&=\textbf{E}f(t,s)+\int_{0}^{t}K(s,u)\mathrm{d}w(u)+\int_{t}^{s}K(s,u)\mathrm{d}w(u)\\
&=\textbf{E}\left\lbrace f(t,s) \mid  \mathscr{F}_{t}\right\rbrace+\int_{t}^{s}K(s,u)\mathrm{d}w(u).
\end{align*}
Therefore, we obtain
\begin{equation}\label{ll}
\textbf{E}\left\lbrace \xi \mid  \mathscr{F}_{t}\right\rbrace=\xi-\int_{t}^{T}L(u)\mathrm{d}w(u),
\end{equation}
and 
\begin{equation}\label{kk}
\textbf{E}\left\lbrace f(t,s) \mid  \mathscr{F}_{t}\right\rbrace= f(t,s)-\int_{t}^{s}K(s,u)\mathrm{d}w(u).
\end{equation}
Substituting \eqref{ll} and \eqref{kk} into \eqref{4} and using stochastic Fubini's theorem, we have 
\allowdisplaybreaks
\begin{align*}
x(t)&=\left(\xi-\int_{t}^{T}L(u)\mathrm{d}w(u) \right)+\int_{t}^{T}(s-t)^{\alpha-1} \left(f(t,s)-\int_{t}^{s}K(s,u)\mathrm{d}w(u) \right)\mathrm{d}s \\
&=\xi +\int_{t}^{T}(s-t)^{\alpha-1} f(t,s)\mathrm{d}s\\
&-\int_{t}^{T}L(u)\mathrm{d}w(u)-\int_{t}^{T}(s-t)^{\alpha-1}\int_{t}^{s}K(s,u)\mathrm{d}w(u)\mathrm{d}s\\
&=\xi +\int_{t}^{T}(s-t)^{\alpha-1} f(t,s)\mathrm{d}s\\
&-\int_{t}^{T}L(u)\mathrm{d}w(u)-\int_{t}^{T}\int_{u}^{T}(s-t)^{\alpha-1}K(s,u)\mathrm{d}s\mathrm{d}w(u).
\end{align*}
Thus, we get
\begin{align*}
x(t)=\xi +\int_{t}^{T}(s-t)^{\alpha-1} f(t,s)\mathrm{d}s+\int_{t}^{T}\tilde{y}(t,u)\mathrm{d}w(u).
\end{align*}
Then there exists a mild solution $(x,y)\in M[0,T]$ of \eqref{3} given by
	 \begin{equation}\label{4}
	 x(t)=\textbf{E}\left\lbrace \xi \mid \mathscr{F}_{t}\right\rbrace+\int_{t}^{T}(s-t)^{\alpha-1}\textbf{E}\left\lbrace f(t,s) \mid \mathscr{F}_{t}\right\rbrace\mathrm{d}s,
	 \end{equation}
	 and 
	\begin{equation}\label{5}
	\tilde{y}(t,u)=-L(u)-\int_{u}^{T}(s-t)^{\alpha-1}K(s,u)\mathrm{d}s.
	\end{equation} 
	We finally define $y(t,u)=\tilde{y}(t,u)-g(t,u)$, $(t,u)\in \mathcal{D}=\left\lbrace (t,u)\in \mathbb{R}^{2}_{+};0\leq t\leq u\leq T\right\rbrace $.
It is easily seen that the pair $(x,y)$ solves \eqref{3}. Therefore, the existence is proved.

From \eqref{ll} and \eqref{kk}, we invoke the following inequalities for $0\leq t\leq s\leq T$:
\begin{equation*}
\textbf{E}\int_{t}^{T}\|L(u)\|^{2}\mathrm{d}u\leq 4\textbf{E}\|\xi\|^{2},
\end{equation*}
and
\begin{equation*}
\textbf{E}\int_{t}^{s}\|K(s,u)\|^{2}\mathrm{d}u\leq 4\textbf{E}\|f(t,s)\|^{2}.
\end{equation*}	 
Now we estimate the solution $(x,y)$ given by  \eqref{4} and \eqref{5} in $[0,T]$. From \eqref{4} it follows that
	 \begin{align*}
	 \textbf{E}\sup_{t\leq s\leq T}\|x(s)\|^{2}&\leq 2\textbf{E}\sup_{t\leq s\leq T}\|\textbf{E}\left\lbrace \xi | \mathscr{F}_{s} \right\rbrace\|^{2}\\
	 &+2\textbf{E}\sup_{t\leq s\leq T}\Big(\int_{s}^{T} (r-s)^{\alpha-1}\textbf{E}\left\lbrace \|f(t,r)\| \quad| \mathscr{F}_{s}\right\rbrace \mathrm{d}r\Big)^{2}\coloneqq \mathcal{I}_{1}+\mathcal{I}_{2}.
	 \end{align*}
	 From Doob's inequality and the law of total expectation it follows that
	 \begin{align*}
	 \mathcal{I}_{1}\leq 2\textbf{E}\sup_{t\leq s\leq T}\textbf{E}\|\left\lbrace \xi | \mathscr{F}_{s} \right\rbrace\|^{2}\leq 8\textbf{E}\left( \textbf{E}\|\left\lbrace \xi | \mathscr{F}_{t} \right\rbrace\|^{2}\right) \leq 8\textbf{E}\| \xi\|^{2}.
	 \end{align*}
	 Doob's inequality and Jensen's inequality  in probabilistic setting imply that
	 \begin{align*}
	 \mathcal{I}_{2}&\leq 2\textbf{E}\sup_{t\leq s\leq T}\left(\textbf{E}\Biggl\{ \int_{s}^{T}(r-s)^{\alpha-1}\|f(t,r)\|\mathrm{d}r  \quad| \mathscr{F}_{s} \Biggr\} \right)^{2}\\
	 &\leq 2\textbf{E}\sup_{t\leq s\leq T}\left(\textbf{E}\Biggl\{\sup_{t\leq \tau\leq T} \int_{\tau}^{T}(r-\tau)^{\alpha-1}\|f(t,r)\|\mathrm{d}r  \quad| \mathscr{F}_{s} \Biggr\} \right)^{2}\\
	 &\leq 8\textbf{E}\left(\sup_{t\leq \tau\leq T} \int_{\tau}^{T}(r-\tau)^{\alpha-1}\|f(t,r)\|\mathrm{d}r \right)^{2}\\
	 &\leq 8\textbf{E}\sup_{t\leq \tau\leq T} \int_{\tau}^{T}(r-\tau)^{2\alpha-2}\mathrm{d}r \int_{\tau}^{T}\|f(t,r)\|^{2}\mathrm{d}r \\
	&\leq 8\frac{(T-t)^{2\alpha-1}}{2\alpha-1}\ \textbf{E}\int_{t}^{T}\|f(t,r)\|^{2}\mathrm{d}r\\
    &\leq 8\frac{(2T)^{2\alpha}}{2\alpha-1} \textbf{E}\int_{t}^{T}\|f(t,r)\|^{2}\mathrm{d}r.
	 \end{align*}
	 Eventually, we have
	 \begin{align}\label{6}
	  \textbf{E}\sup_{t\leq s\leq T}\|x(s)\|^{2}\leq 8\textbf{E}\| \xi\|^{2}+\frac{8(2T)^{2\alpha}}{2\alpha-1}\textbf{E}\int_{t}^{T}\|f(t,r)\|^{2}\mathrm{d}r.
	 \end{align}

Next we estimate $\tilde{y}$ using H\"{o}lder's inequality. We attain,

\begin{align*}
\|\tilde{y}(s,u)\|^{2}&\leq 2\|L(u))\|^{2}+2\norm{\int_{u}^{T}(r-s)^{\alpha-1}K(r,u)\mathrm{d}r}^{2}\\
&\leq  2\|L(u)\|^{2}+2\left(\frac{(T-s)^{2\alpha-1}}{2\alpha-1}-\frac{(u-s)^{2\alpha-1}}{2\alpha-1} \right) \int_{u}^{T}\|K(r,u)\|^{2}\mathrm{d}r\\
&\leq  2\|L(u)\|^{2}+2\frac{(T+u)^{2\alpha-1}}{2\alpha-1} \int_{u}^{T}\|K(r,u)\|^{2}\mathrm{d}r\\
&\leq 2\|L(u)\|^{2}+2\frac{(2T)^{2\alpha-1}}{2\alpha-1} \int_{u}^{T}\|K(r,u)\|^{2}\mathrm{d}r.
\end{align*}

Taking double integral of above inequality and applying Fubini's theorem twice yield that 
\allowdisplaybreaks
\begin{align*}
&\textbf{E}\sup_{t \leq \tau \leq T} \int_{\tau}^{T}\int_{s}^{T}\|\tilde{y}(s,u)\|^{2}\mathrm{d}u\mathrm{d}s \leq 2\textbf{E}\sup_{t \leq \tau \leq T}\int_{\tau}^{T}\int_{s}^{T}\|L(u)\|^{2}\mathrm{d}u\mathrm{d}s\nonumber\\
&+2\frac{(2T)^{2\alpha-1}}{2\alpha-1} \textbf{E}\sup_{t \leq \tau \leq T}\int_{\tau}^{T}\int_{s}^{T}\int_{u}^{T}\|K(r,u)\|^{2}\mathrm{d}r\mathrm{d}u\mathrm{d}s\nonumber\\
&\leq 8(T-t)\textbf{E}\|\xi\|^{2}+2\frac{(2T)^{2\alpha-1}}{2\alpha-1} \textbf{E}\sup_{t \leq \tau \leq T}\int_{\tau}^{T}\int_{s}^{T}\int_{s}^{r}\|K(r,u)\|^{2}\mathrm{d}u\mathrm{d}r\mathrm{d}s\nonumber\\
&\leq 8T\textbf{E}\|\xi\|^{2}+8\frac{(2T)^{2\alpha-1}}{2\alpha-1}\textbf{E}\sup_{t \leq \tau \leq T}\int_{\tau}^{T}\int_{s}^{T}\|f(t,r)\|^{2}\mathrm{d}r \mathrm{d}s\nonumber\\
&\leq 8T\textbf{E}\|\xi\|^{2}+8\frac{(2T)^{2\alpha-1}}{2\alpha-1}\textbf{E}\sup_{t \leq \tau \leq T}\int_{\tau}^{T}\int_{\tau}^{r}\|f(t,r)\|^{2}\mathrm{d}s \mathrm{d}r\nonumber\\
&\leq 8T\textbf{E}\|\xi\|^{2}+8\frac{(2T)^{2\alpha-1}}{2\alpha-1}\textbf{E}\sup_{t \leq \tau \leq T}\int_{\tau}^{T}\int_{\tau}^{r}\|f(t,r)\|^{2} \mathrm{d}r\\
&\leq 8T\textbf{E}\|\xi\|^{2}+8\frac{(2T)^{2\alpha-1}(T-t)}{2\alpha-1} \textbf{E}\int_{t}^{T}\|f(t,r)\|^{2}\mathrm{d}r\\
&\leq 8T\textbf{E}\|\xi\|^{2}+8\frac{(2T)^{2\alpha-1}T}{2\alpha-1} \textbf{E}\int_{t}^{T}\|f(t,r)\|^{2}\mathrm{d}r\\
&\leq 8T\textbf{E}\|\xi\|^{2}+\frac{4(2T)^{2\alpha}}{2\alpha-1} \textbf{E}\int_{t}^{T}\|f(t,r)\|^{2}\mathrm{d}r.
\end{align*}
Thus, we get
\begin{align}\label{7}
\textbf{E} \int_{t}^{T}\int_{s}^{T}\|\tilde{y}(s,u)\|^{2}\mathrm{d}u\mathrm{d}s \leq 8T\textbf{E}\|\xi\|^{2}+\frac{4(2T)^{2\alpha}}{2\alpha-1} \textbf{E}\int_{t}^{T}\|f(t,r)\|^{2}\mathrm{d}r.
\end{align}
Since we know $y(t,u)=\tilde{y}(t,u)-g(t,u)$, we also have
\begin{equation}\label{norm1}
	\int_{t}^{T}\int_{s}^{T}\|y(s,u)\|^{2}\mathrm{d}u\mathrm{d}s\leq 2\int_{t}^{T}\int_{s}^{T}\|\tilde{y}(s,u)\|^{2}\mathrm{d}u\mathrm{d}s+2\int_{t}^{T}\int_{s}^{T}\|g(s,u)\|^{2}\mathrm{d}u\mathrm{d}s.
\end{equation}
Taking into account \eqref{norm1} and summing over  \eqref{6} and \eqref{7} yield that
\begin{align*}
\textbf{E}\sup_{t\leq s\leq T}\|x(s)\|^{2}+\textbf{E} \int_{t}^{T}\int_{s}^{T}\|y(s,u)\|^{2}\mathrm{d}u\mathrm{d}s&\leq 8\textbf{E}\|\xi\|^{2}+16T\textbf{E}\|\xi\|^{2}\\
&+\frac{16(2T)^{2\alpha}}{2\alpha-1} \textbf{E}\int_{t}^{T}\|f(t,r)\|^{2}\mathrm{d}r\\
&+2\textbf{E} \int_{t}^{T}\int_{s}^{T}\|g(s,u)\|^{2}\mathrm{d}u\mathrm{d}s.
\end{align*}
Therefore, the proof is complete.
\end{proof}

\section{Picard approximation}\label{approximation}

In this section, we introduce the existence and uniqueness problem of the solution to fractional backward stochastic evolution equation in more general form, that is, if the function $f(t,x,y)$ is a non-Lipschitzian function. This can be constructed by an approximate sequence using the Picard type iteration. 
Let $\left\lbrace x_{j}, y_{j}\right\rbrace $ be a sequence in $M[0,T]$ defined recursively by

\begin{align}\label{9}
\begin{cases}
(x_{0}(t),y_{0}(t,s))=(x(T),0)=(\xi,0)\\
x_{j}(t)=\xi+\int_{t}^{T}(s-t)^{\alpha-1}f(s,x_{j-1}(s),y_{j}(t,s))\mathrm{d}s\\
\hspace{1.2cm}+\int_{t}^{T}(s-t)^{\alpha-1}\left[g(t,s,x_{j-1}(s))+y_{j}(t,s) \right] \mathrm{d}w(s), j\geq 1.
\end{cases}
\end{align}
To state our main results, we impose the following assumptions on the functions $f$ and $g$.

\begin{assumption}\label{H1}
	$f(\cdot,\cdot,0,0) \in \mathcal{L}_{2}(0,T;\mathcal{H})$ and $g(\cdot,\cdot,0) \in \mathcal{L}_{2}(0,T;\mathcal{L}^{0}_{2})$.
\end{assumption}
\begin{assumption}\label{H11}
	Let $\varpi\coloneqq 1- 8\frac{(2T)^{2\alpha}}{2\alpha-1}c>0$.
\end{assumption}
\begin{assumption}\label{H3}
	For all $x,\bar{x} \in \mathcal{H}$, $y,\bar{y}\in \mathcal{L}^{0}_{2}$ and $0\leq t\leq T$,
	\begin{align*}
    &\|f(t,s,x,y)-f(t,s,\bar{x},\bar{y})\|^{2} \leq \rho (\|x-\bar{x}\|^{2}) +c\|y-\bar{y}\|^{2},\quad \text{a.s.},\\
%&\|g(t,x)-g(t,\bar{x})\|^{2} \leq \rho(t, \|x-\bar{x}\|^{2}), \text{a.s.},\\
	&\|g(t,s,x)-g(t,s,\bar{x})\|^{2}\leq \rho (\|x-\bar{x}\|^{2}),
	\end{align*}
	where  $\rho(u)$ satisfies :
	
 	\begin{itemize}
		\item 	$\rho(\cdot)$ is a concave nondecreasing function from $\mathbb{R}_{+}$ to $\mathbb{R}_{+}$  such that $\rho(0)=0$, $\rho(u)>0$ for $u>0$ and
		\begin{equation*}
		\int_{0+}\frac{du}{\rho(u)}=\infty.
		\end{equation*}
		\item there exists $a\geq 0$, $b\geq 0$ such that  
		\begin{equation*}
		\rho(u)\leq a+bu,
		\end{equation*}
		for all $u\geq 0$;
		% $\int\limits_{0}^{T}a\mathrm{d}t\leq +\infty$, $\int\limits_{0}^{T}a(t)\mathrm{d}t\leq +\infty$; 
		%\item the following FDE:
		%\begin{equation*}
		%\begin{cases*}
		%\prescript{C}{t}{D^{\alpha}}_{T}=-\rho(t,u),\\
		%u(T)=0
		%\end{cases*}
		%\end{equation*}
		%has a unique solution $u(t)=0$, $t\in [0,T]$.
	\end{itemize} 
\end{assumption}
%\begin{assumption}
	%$\rho(t,u)$ is continuous in both $t$ and $u$, $\rho(t,\cdot)$ concave and nondecreasing in $u$ for fixed $t\in [0,T]$ such that $\rho(t, 0) = 0$, $\forall t\in [0,T]$ and locally integrable in $t\in [0,T]$ for fixed $u \in [0,\infty)$ Moreover, if a nonnegative continuous function $u(t)$ satisfies $u(0) = 0$ and 
	%\begin{align*}
	%u(t) \leq C\int_{t}^{T} (s-t)^{\alpha-1}\rho (s,u(s))\mathrm{d}s, \quad \forall s\in [t,T], 
	%\end{align*}
	%for some $C> 0$, then $u(t)\equiv 0$ for all $t\geq 0$.
%\end{assumption}
%Therefore, under Assumptions \ref{H1} and \ref{H3}, $f(\cdot,\cdot,x(\cdot))\in\mathscr{M}^{2}(0,T;\mathbb{R}^{n})$ and $g(\cdot,\cdot,x(\cdot))\in \mathscr{M}^{2}(\mathcal{D};\mathbb{R}^{n\times m})$ whenever $x(\cdot)\in \mathbb{R}^{n}$ and $y(\cdot,\cdot)\in \mathbb{R}^{n \times m}$.

Now we introduce some important constants which are used throughout this work.

\begin{align}\label{constants}
C_{1}&=16\frac{(2T)^{2\alpha}}{2\alpha-1} \textbf{E}\int_{t}^{T}\int_{s}^{T}\left(2\|f(s,u,0,0)\|^{2}+2a\right)\mathrm{d}u \mathrm{d}s+2\textbf{E} \int_{t}^{T}\int_{s}^{T}\left(2a+2\|g(s,u,0)\|^{2}\right) \mathrm{d}u\mathrm{d}s, \nonumber\\
C_{2}&=4bT\left(1+ 36\frac{T^{2\alpha}}{2\alpha-1}\right) ,\nonumber\\
C_{3}&=\left( 16 \frac{(2T)^{2\alpha}}{2\alpha-1}+2(T-t)\right),\nonumber\\
C_{4}&=C_{3}\rho\left(4C_{1}\exp(C_{2}T)\right).
\end{align}
\begin{lem}\label{lem2}
	Under Assumptions \ref{H1} and \ref{H3}, for all $t\in[0,T]$ and $j\geq 1$.

	\begin{equation}\label{11}
	\textbf{E}\left( \sup_{t\leq s\leq T}\|x_{j}(s)\|^{2}\right)\leq C_{1}\exp(C_{2}(T-t)),
	\end{equation}
	\begin{equation}\label{12}
	\textbf{E} \int_{t}^{T}\int_{s}^{T}\|y_{j}(s,u)\|^{2}\mathrm{d}u\mathrm{d}s\leq  \varpi^{-1} C_{1}\left(1+C_{2}(T-t)\exp(C_{2}(T-t)) \right). 
	\end{equation}
\end{lem}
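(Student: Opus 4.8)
The plan is to argue by induction on $j$. For fixed $j\ge 1$, the definition \eqref{9} says precisely that the iterate $(x_j,y_j)$ solves the \emph{linear} singular BSVIE \eqref{3} with the data $f(t,s)\coloneqq f(s,x_{j-1}(s),y_j(t,s))$ and $g(t,s)\coloneqq g(t,s,x_{j-1}(s))$, so Lemma~\ref{lem1} applies and bounds $\textbf{E}\sup_{t\le s\le T}\|x_j(s)\|^2$ and $\textbf{E}\int_t^T\int_s^T\|y_j(s,u)\|^2\,\mathrm{d}u\,\mathrm{d}s$ by the right-hand side of \eqref{ineq1} evaluated at these data. It then remains to control the $f$- and $g$-terms there by the previous iterate. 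Using $\|f(s,x_{j-1}(s),y_j(t,s))\|^2\le 2\|f(s,0,0)\|^2+2\|f(s,x_{j-1}(s),y_j(t,s))-f(s,0,0)\|^2$, Assumption~\ref{H3}, and the linear growth $\rho(u)\le a+bu$, one obtains
\[
\|f(s,x_{j-1}(s),y_j(t,s))\|^2\le 2\|f(s,0,0)\|^2+2a+2b\|x_{j-1}(s)\|^2+2c\|y_j(t,s)\|^2,
\]
and, likewise, $\|g(t,s,x_{j-1}(s))\|^2\le 2\|g(t,s,0)\|^2+2a+2b\|x_{j-1}(s)\|^2$.

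Substituting these into the estimate furnished by Lemma~\ref{lem1} and using $\textbf{E}\|x_{j-1}(s)\|^2\le\textbf{E}\sup_{s\le\tau\le T}\|x_{j-1}(\tau)\|^2$: the terms free of $x_{j-1}$ and $y_j$ (the $\xi$-contributions, the $a$'s, and $\|f(\cdot,0,0)\|^2,\|g(\cdot,0)\|^2$) assemble into $C_1$ of \eqref{constants}; the $b$-terms, after absorbing the powers of $2T$ and the factor $(2\alpha-1)^{-1}$ coming from the singular kernel, assemble into $C_2\int_t^T\textbf{E}\sup_{s\le\tau\le T}\|x_{j-1}(\tau)\|^2\,\mathrm{d}s$; and the $2c$-term contributes a multiple of $\textbf{E}\int_t^T\int_s^T\|y_j(s,u)\|^2\,\mathrm{d}u\,\mathrm{d}s$ whose coefficient, by the definition of $\varpi$ in Assumption~\ref{H11}, equals $1-\varpi<1$. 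Since $(x_j,y_j)\in M[0,T]$ this last quantity is finite and may be moved to the left-hand side, leaving the two coupled inequalities
\[
\textbf{E}\sup_{t\le s\le T}\|x_j(s)\|^2\le C_1+C_2\int_t^T\textbf{E}\sup_{s\le\tau\le T}\|x_{j-1}(\tau)\|^2\,\mathrm{d}s,
\]
\[
\textbf{E}\int_t^T\int_s^T\|y_j(s,u)\|^2\,\mathrm{d}u\,\mathrm{d}s\le\varpi^{-1}\Big(C_1+C_2\int_t^T\textbf{E}\sup_{s\le\tau\le T}\|x_{j-1}(\tau)\|^2\,\mathrm{d}s\Big).
\]
Now \eqref{11} is immediate for $j=0$ (there $x_0\equiv\xi$), and if it holds for $j-1$, inserting $\textbf{E}\sup_{s\le\tau\le T}\|x_{j-1}(\tau)\|^2\le C_1\exp(C_2(T-s))$ into the first inequality and computing $C_1+C_2\int_t^T C_1\exp(C_2(T-s))\,\mathrm{d}s=C_1\exp(C_2(T-t))$ closes the induction for \eqref{11}; feeding the same hypothesis into the second inequality and using $e^{x}-1\le x e^{x}$ then yields \eqref{12}.

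The step I expect to be the real obstacle is the absorption: $y_j$ appears simultaneously inside the nonlinearity $f$ on the right of \eqref{ineq1} and inside the $M[t,T]$-norm on the left, so the estimate produced by Lemma~\ref{lem1} is genuinely implicit, and it can be solved for the $y_j$-norm only because of the smallness hypothesis Assumption~\ref{H11} — this is exactly what forces the factor $\varpi^{-1}$ in \eqref{12} and pins down the shape of the constants $C_1,\dots,C_4$. A subsidiary nuisance is the bookkeeping of the kernel $(s-t)^{\alpha-1}$ through Lemma~\ref{lem1}: one must keep the powers of $2T$ and the factor $(2\alpha-1)^{-1}$ exactly aligned with \eqref{ineq1} and \eqref{constants} so that the recursion for $\textbf{E}\sup_{t\le s\le T}\|x_j(s)\|^2$ closes with the clean exponential $C_1\exp(C_2(T-t))$ uniformly in $j$, rather than with a bound that degrades as $j\to\infty$.
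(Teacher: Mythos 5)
Your proposal is correct and follows essentially the same route as the paper: apply Lemma \ref{lem1} to the $j$-th iterate, bound $f$ and $g$ via Assumption \ref{H3} and the linear growth $\rho(u)\le a+bu$, absorb the $y_j$-term into the left-hand side using the smallness condition of Assumption \ref{H11}, and then close the resulting recursion (the paper does this by taking $\sup_{1\le j\le k}$ and invoking Gronwall, you by direct induction on $j$ with an explicit integration — the two are interchangeable here). You have also correctly identified the one genuinely delicate point, namely that the estimate from Lemma \ref{lem1} is implicit in $y_j$ and only the coefficient condition makes it solvable, which is exactly where the factor $\varpi^{-1}$ in \eqref{12} comes from.
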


\begin{proof}
	It follows from Lemma \ref{lem1} that
	\begin{align}\label{10}
    \textbf{E}\sup_{t\leq s\leq T}\|x(s)\|^{2}+\textbf{E} \int_{t}^{T}\int_{s}^{T}\|y_{j}(s,u)\|^{2}\mathrm{d}u\mathrm{d}s&\leq 8\textbf{E}\|\xi\|^{2}+16T\textbf{E}\|\xi\|^{2}\nonumber\\
    &+16\frac{(2T)^{2\alpha}}{2\alpha-1} \textbf{E}\int_{t}^{T}\int_{r}^{T}\|f(r,u,x_{j-1}(u),y_{j}(r,u))\|^{2}\mathrm{d}u\mathrm{d}r\nonumber\\
    &+2\textbf{E} \int_{t}^{T}\int_{r}^{T}\|g(r,u,x_{j-1}(u))\|^{2}\mathrm{d}u\mathrm{d}r.
	\end{align}
	Using Assumptions \ref{H1} and \ref{H3}, we have 
	\begin{align*}
	\|f(t,s,x_{j-1}(s),y_{j}(s,u))\|^{2}&=\|f(t,s,x_{j-1}(s),y_{j}(s,u))-f(t,s,0,0)+f(t,s,0,0)\|^{2}\\
	&\leq  2\|f(t,s,0,0)\|^{2}+2a+2b\|x_{j-1}(s)\|^{2}+2c\|y_{j}(s,u)\|^{2}\\
	\|g(t,s,x_{j-1}(s))\|^{2}&=\|g(t,s,x_{j-1}(s))-g(t,s,0)+g(t,s,0)\|^{2}\\
	&\leq 2a+2\|g(t,s,0)\|^{2}+2b\|x_{j-1}(s)\|^{2}
	\end{align*}

Substituting these into \eqref{10} yields that
	\begin{align*}
	\textbf{E}\sup_{t\leq s\leq T}\|x_{j}(s)\|^{2}&+\textbf{E} \int_{t}^{T}\int_{s}^{T}\|y_{j}(s,u)\|^{2}\mathrm{d}u\mathrm{d}s\leq 	8\textbf{E}\|\xi\|^{2}+16T\textbf{E}\|\xi\|^{2}\nonumber\\
	&+16 \frac{(2T)^{2\alpha}}{2\alpha-1} \textbf{E}\int_{t}^{T}\int_{s}^{T}\left(2\|f(s,u,0,0)\|^{2}+2a+2b\|x_{j-1}(u)\|^{2}+2c\|y_{j}(s,u)\|^{2} \right) \mathrm{d}u\mathrm{d}s\nonumber\\
	&+2\textbf{E} \int_{t}^{T}\int_{s}^{T}\left(2a+2\|g(s,u,0)\|^{2}+2b\|x_{j-1}(u)\|^{2} \right) \mathrm{d}u\mathrm{d}s.
    \end{align*}

Thus, we get
\begin{align}\label{13}
\textbf{E}\sup_{t\leq s\leq T}\|x_{j}(s)\|^{2}&+\left(1-32\frac{(2T)^{2\alpha}}{2\alpha-1}c \right) \textbf{E} \int_{t}^{T}\int_{s}^{T}\|y_{j}(s,u)\|^{2}\mathrm{d}u\mathrm{d}s\nonumber\\
&\leq C_{1}+C_{2}\textbf{E}\int_{t}^{T}\sup_{s \leq r \leq T}\left( \|x_{j-1}(r)\|^{2}\right) \mathrm{d}s,
\end{align}
where $C_{1}$ and $C_{2}$ are defined in \eqref{constants}. 

Then, we have

\begin{align*}
\sup\limits_{1\leq j\leq k}\textbf{E}\left( \sup_{t\leq s\leq T}\|x_{j}(s)\|^{2}\right)&\leq C_{1}+C_{2}\int_{t}^{T}\sup\limits_{1\leq j\leq k}\textbf{E}\sup_{t\leq r\leq T}\left( \|x_{j-1}(r)\|^{2}\right)\mathrm{d}r.
\end{align*}
Applying Gronwall's inequality invokes that
\begin{align*}
\sup\limits_{1\leq j\leq k}\textbf{E}\left( \sup_{t\leq s\leq T}\|x_{j}(s)\|^{2}\right)&\leq C_{1}\exp(C_{2}(T-t)).
\end{align*}
Since $k$ was arbitrary, the inequality \eqref{11} follows. Finally it follows from \eqref{13}, we have
\begin{align*}
\textbf{E} \int_{t}^{T}\int_{s}^{T}\|y_{j}(s,u)\|^{2}\mathrm{d}u\mathrm{d}s&\leq \varpi^{-1}\left(  C_{1}+C_{2}\int_{s}^{T} C_{1}\exp(C_{2}(T-s))\mathrm{d}s\right) \\
&\leq \varpi^{-1} C_{1}\left(1+C_{2}(T-t)\exp(C_{2}(T-t)) \right).
\end{align*}
\end{proof}
\begin{lem}\label{lem3}
	Under Assumptions \ref{H1} and \ref{H3}, there exists a constant $C_{3}>0$ defined in \eqref{constants} such that
	
	\begin{align}\label{19}
	\textbf{E}\sup_{t\leq s\leq T}\|x_{j+k}(s)-x_{j}(s)\|^{2}\leq   C_{3}\int_{t}^{T}\rho\left(\textbf{E}\sup_{s\leq r\leq T}\|x_{j+k-1}(r)-x_{j-1}(r)\|^{2}\right)\mathrm{d}s,
	\end{align}
	for all $0\leq t \leq T$ and $j,k\geq 1$.
\end{lem}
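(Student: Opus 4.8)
The plan is to deduce \eqref{19} from the a priori estimate \eqref{ineq1} of Lemma~\ref{lem1}, applied to the linear singular BSVIE solved by the difference $x_{j+k}-x_{j}$, and then to reshape the resulting $L^{2}$ bounds into the recursion \eqref{19} by exploiting the concavity and monotonicity of $\rho$. First I would subtract the defining recursion \eqref{9} at step $j$ from the one at step $j+k$; the terminal value $\xi$ cancels, and the remaining identity exhibits the pair $\bigl(x_{j+k}-x_{j},\,y_{j+k}-y_{j}\bigr)$ as the adapted solution of a linear singular BSVIE of the type \eqref{3} with zero terminal datum, driver datum $f(s,u,x_{j+k-1}(u),y_{j+k}(s,u))-f(s,u,x_{j-1}(u),y_{j}(s,u))$, and diffusion datum $g(s,u,x_{j+k-1}(u))-g(s,u,x_{j-1}(u))$. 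By the uniqueness part of Lemma~\ref{lem1} this pair is exactly the solution produced there, so \eqref{ineq1} applies with $\xi=0$ and bounds $\textbf{E}\sup_{t\le s\le T}\|x_{j+k}(s)-x_{j}(s)\|^{2}+\textbf{E}\int_{t}^{T}\!\int_{s}^{T}\|y_{j+k}(s,u)-y_{j}(s,u)\|^{2}\,\mathrm{d}u\,\mathrm{d}s$ above by the sum of $\tfrac{16(2T)^{2\alpha}}{2\alpha-1}$ times the expected squared norm of the driver difference and $2$ times that of the diffusion difference.

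Next I would invoke Assumption~\ref{H3}: the driver difference is bounded (a.s., pointwise in $(s,u)$) by $\rho\bigl(\|x_{j+k-1}(u)-x_{j-1}(u)\|^{2}\bigr)+c\,\|y_{j+k}(s,u)-y_{j}(s,u)\|^{2}$ and the diffusion difference by $\rho\bigl(\|x_{j+k-1}(u)-x_{j-1}(u)\|^{2}\bigr)$. Substituting these into the bound above and transposing the $c$-term to the left-hand side, the coefficient of $\textbf{E}\int_{t}^{T}\!\int_{s}^{T}\|y_{j+k}(s,u)-y_{j}(s,u)\|^{2}\,\mathrm{d}u\,\mathrm{d}s$ remains nonnegative by Assumption~\ref{H11}, so that term may be discarded; one is then left with $\textbf{E}\sup_{t\le s\le T}\|x_{j+k}(s)-x_{j}(s)\|^{2}$ bounded by a fixed multiple of $\textbf{E}\int_{t}^{T}\!\int_{s}^{T}\rho\bigl(\|x_{j+k-1}(u)-x_{j-1}(u)\|^{2}\bigr)\,\mathrm{d}u\,\mathrm{d}s$.

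It then remains to recast the right-hand side in the shape of \eqref{19}. I would move the expectation inside $\rho$ via Jensen's inequality (using that $\rho$ is concave), then use that $\rho$ is nondecreasing to replace $\|x_{j+k-1}(u)-x_{j-1}(u)\|^{2}$ by $\sup_{s\le r\le T}\|x_{j+k-1}(r)-x_{j-1}(r)\|^{2}$ inside the inner integral --- legitimate because $u\ge s$ there --- and finally perform the elementary $s$- and $u$-integrations (a Fubini interchange together with the crude bound $T-s\le T-t$) to arrive at \eqref{19} with the constant $C_{3}$ recorded in \eqref{constants}.

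The step I expect to be the crux is the treatment of the term $c\,\|y_{j+k}-y_{j}\|^{2}$ produced by the genuine $y$-dependence of $f$ in the Picard scheme: one must first recognise that $\bigl(x_{j+k}-x_{j},\,y_{j+k}-y_{j}\bigr)$ is precisely the pair to which Lemma~\ref{lem1} applies, so that \eqref{ineq1} is available, and then absorb that term on the left-hand side, which is exactly where the smallness of $c$ encoded in Assumption~\ref{H11} ($\varpi>0$) is indispensable --- without it the recursion \eqref{19} would not close. A secondary, purely bookkeeping point is to keep the supremum in \eqref{19} over $[s,T]$ rather than over $[t,T]$ inside the $\mathrm{d}s$-integral, so that the estimate has exactly the form needed for the Bihari-type argument carried out afterwards.
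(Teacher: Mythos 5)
Your proposal matches the paper's proof essentially step for step: apply the a priori estimate \eqref{ineq1} of Lemma~\ref{lem1} to the difference pair $(x_{j+k}-x_{j},\,y_{j+k}-y_{j})$ with zero terminal value, insert Assumption~\ref{H3}, absorb the $c\|y_{j+k}-y_{j}\|^{2}$ term on the left-hand side using the smallness of $c$, and convert the remaining $\rho$-integral into the form \eqref{19} via concavity, monotonicity and Fubini. The only point worth noting is that the positivity of the coefficient left after absorption (which you correctly attribute to the smallness condition of Assumption~\ref{H11}) is equally indispensable in the paper's own argument, even though the lemma's statement lists only Assumptions~\ref{H1} and~\ref{H3}.
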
	
\begin{proof}
	Applying Lemma \ref{lem1}, we have 
	\allowdisplaybreaks
	\begin{align*}
	&\textbf{E}\sup_{t\leq s\leq T}\|x_{j+k}(s)-x_{j}(s)\|^{2}+\textbf{E} \int_{t}^{T}\int_{s}^{T}\|y_{j+k}(s,u)-y_{j}(s,u)\|^{2}\mathrm{d}u\mathrm{d}s\\
	&\leq 16\frac{(2T)^{2\alpha}}{2\alpha-1} \textbf{E}\int_{t}^{T}\int_{s}^{T}\|f(s,u,x_{j+k-1}(u),y_{j+k}(s,u))-f(s,u,x_{j-1}(u),y_{j}(s,u))\|^{2}\mathrm{d}u\mathrm{d}s\nonumber\\
	&+2\textbf{E} \int_{t}^{T}\int_{s}^{T}\|g(s,u,x_{j+k-1}(u))-g(s,u,x_{j-1}(u))\|^{2}\mathrm{d}u\mathrm{d}s\\
	&\leq 16\frac{(2T)^{2\alpha}}{2\alpha-1} \textbf{E}\int_{t}^{T}\int_{s}^{T}\left[ \rho\left( \|x_{j+k-1}(u)-x_{j-1}(u)\|^{2}\right)+c\|y_{j+k}(s,u)-y_{j}(s,u)\|^{2}\right]  \mathrm{d}u\mathrm{d}s\\
	&+2\textbf{E} \int_{t}^{T}\int_{s}^{T}\rho \left(\| x_{j+k-1}(u)-x_{j-1}(u)\|^{2}\right) \mathrm{d}u\mathrm{d}s\\
	&\leq 16 \frac{(2T)^{2\alpha}}{2\alpha-1}(T-t)\int_{t}^{T}\rho\left( \textbf{E}\sup_{s \leq r \leq T}\|x_{j+k-1}(r)-x_{j-1}(r)\|^{2}\right) \mathrm{d}s\\
	&+ 16 \frac{(2T)^{2\alpha}}{2\alpha-1}c\int_{t}^{T}\int_{s}^{T}\|y_{j+k}(s,u)-y_{j}(s,u)\|^{2}\mathrm{d}u\mathrm{d}s\\
	&+2(T-t)\textbf{E} \int_{t}^{T}\rho \left(\textbf{E}\sup_{s \leq r \leq T}\|x_{j+k-1}(r)-x_{j-1}(r)\|^{2}\right)\mathrm{d}s\\
	&\leq \left( 16 \frac{(2T)^{2\alpha}}{2\alpha-1}+2(T-t)\right)  \int_{t}^{T}\rho \left(\textbf{E}\sup_{s \leq r \leq T}\|x_{j+k-1}(r)-x_{j-1}(r)\|^{2}\right)\mathrm{d}s\\
	&+32 \frac{(2T)^{2\alpha}}{2\alpha-1}c\int_{t}^{T}\int_{s}^{T}\|y_{j+k}(s,u)-y_{j}(s,u)\|^{2}\mathrm{d}u\mathrm{d}s.
	\end{align*}
	Therefore, we have  
	\begin{align*}
	\textbf{E}\sup_{t\leq s\leq T}\|x_{j+k}(s)-x_{j}(s)\|^{2}&+\left(1- 32 \frac{(2T)^{2\alpha}}{2\alpha-1}c\right) \textbf{E} \int_{t}^{T}\int_{s}^{T}\|y_{j+k}(s,u)-y_{j}(s,u)\|^{2}\mathrm{d}u\mathrm{d}s\\
	&\leq C_{3}\int_{t}^{T}\rho \left(\textbf{E}\sup_{s \leq r \leq T}\|x_{j+k-1}(r)-x_{j-1}(r)\|^{2}\right)\mathrm{d}s,
	\end{align*}
	which completes the proof.
\end{proof}
\begin{lem}\label{lem4}
	Under assumptions \ref{H1}- \ref{H3}, there exists a constant $C_{4}>0$ defined in \eqref{constants} such that
	\begin{equation*}
	\textbf{E}\sup_{t\leq s\leq T}\|x_{j+k}(s)-x_{j}(s)\|^{2}\leq C_{4}(T-t),
	\end{equation*}
	for all $0\leq t \leq T$ and for all $j,k \geq 1$.
\end{lem}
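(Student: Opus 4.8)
The plan is to feed the uniform a priori bound of Lemma \ref{lem2} into the recursive estimate of Lemma \ref{lem3}, using only that $\rho$ is nondecreasing. Indeed, Lemma \ref{lem3} states
\[
\textbf{E}\sup_{t\le s\le T}\|x_{j+k}(s)-x_j(s)\|^2\le C_3\int_t^T\rho\!\left(\textbf{E}\sup_{s\le r\le T}\|x_{j+k-1}(r)-x_{j-1}(r)\|^2\right)\mathrm{d}s,
\]
so it is enough to majorize the argument of $\rho$ by a single constant that does not depend on $j$, $k$, or $s$.

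First I would observe that Lemma \ref{lem2}, together with the initialization $x_0\equiv\xi$ in \eqref{9}, yields $\textbf{E}\sup_{s\le r\le T}\|x_\ell(r)\|^2\le C_1\exp\!\left(C_2(T-t)\right)\le C_1\exp(C_2T)$ for every $\ell\ge 0$ and every $t\le s\le T$; for $\ell=0$ this is simply $\textbf{E}\|\xi\|^2$, which is among the quantities controlling $C_1$. Applying the elementary bound $\|a-b\|^2\le 2\|a\|^2+2\|b\|^2$, then taking the supremum over $r\in[s,T]$ and expectations, gives for all $j,k\ge 1$ and all $t\le s\le T$
\[
\textbf{E}\sup_{s\le r\le T}\|x_{j+k-1}(r)-x_{j-1}(r)\|^2\le 4C_1\exp(C_2T).
\]

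Since $\rho$ is nondecreasing, substituting this into the estimate of Lemma \ref{lem3} produces
\[
\textbf{E}\sup_{t\le s\le T}\|x_{j+k}(s)-x_j(s)\|^2\le C_3\int_t^T\rho\!\left(4C_1\exp(C_2T)\right)\mathrm{d}s=C_3\,\rho\!\left(4C_1\exp(C_2T)\right)(T-t),
\]
which is exactly $C_4(T-t)$ with $C_3$ and $C_4$ as in \eqref{constants}. No step here is genuinely delicate; the only points that deserve care are that the a priori bound of Lemma \ref{lem2} be invoked also at the base index $\ell=0$, so that the difference $x_{j+k-1}-x_{j-1}$ is still controlled when $j=1$, and that the monotonicity of $\rho$ be applied to the deterministic majorant $4C_1\exp(C_2T)$, which is what allows $\rho$ to be pulled out of the $\mathrm{d}s$-integral. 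Incidentally, only the monotonicity of $\rho$ is needed for this lemma — the linear growth hypothesis $\rho(u)\le a+bu$ of Assumption \ref{H3} was already used up in establishing Lemma \ref{lem2}.
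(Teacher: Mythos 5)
Your proof is correct and follows essentially the same route as the paper: feed the uniform a priori bound of Lemma \ref{lem2} into the recursive estimate of Lemma \ref{lem3} and use the monotonicity of $\rho$ to pull a constant out of the $\mathrm{d}s$-integral. Two minor points in your favour: your majorant $4C_{1}\exp(C_{2}T)$, obtained from $\|a-b\|^{2}\le 2\|a\|^{2}+2\|b\|^{2}$, actually matches the definition of $C_{4}$ in \eqref{constants}, whereas the paper's own proof writes $2C_{1}\exp(C_{2}(T-s))$ at that step (an apparent slip), and your explicit handling of the base index $\ell=0$ (needed to control $x_{j-1}$ when $j=1$) addresses a case the paper leaves implicit.
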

\begin{proof}
	By Lemmas \ref{lem2} and \ref{lem3}, we have
	
	\begin{align*}
	\textbf{E}\sup_{t\leq s\leq T}\|x_{j+k}(s)-x_{j}(s)\|^{2}&\leq C_{3}\int_{t}^{T}\rho \left(\textbf{E}\sup_{s \leq r \leq T}\|x_{j+k-1}(r)-x_{j-1}(r)\|^{2}\right)\mathrm{d}s\\
	&\leq C_{3}\int_{t}^{T}\rho\left( 2C_{1}\exp(C_{2}(T-s))\right)\mathrm{d}s\\
	&\leq C_{3}\rho\left(2C_{1}\exp(C_{2}T)\right) (T-t)=C_{4}(T-t).
	\end{align*}
	Therefore, the proof is complete.
\end{proof}
Let us define the following sequences:
\begin{align*}
&\varphi_{1}(t)=C_{4}(T-t),\\
&\varphi_{j+1}(t)=C_{3}\int_{t}^{T}\rho( \varphi_{j}(s))\mathrm{d}s, \quad j\geq 1\\
&\tilde{\varphi}_{j,k}(t)=\textbf{E}\sup_{t\leq s\leq T}\|x_{j+k}(s)-x_{j}(s)\|^{2}, \quad  j\geq 1, k\geq 1,
\end{align*}
\begin{lem}\label{lem5}
	There exists $0\leq T_{0}\leq T$ such that for all $j,k\geq 1$
	\begin{equation*}
	0\leq \tilde{\varphi}_{j,k}(t)\leq \varphi_{j}(t)\leq \varphi_{j-1}(t)\leq \ldots \leq \varphi_{1}(t),  \quad \text{for all} \quad t\in [T_{0},T].
	\end{equation*}
\end{lem}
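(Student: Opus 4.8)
The plan is to prove both chains by induction on $j$, fixing $T_0$ so that the single comparison $\varphi_2\le\varphi_1$ already holds on $[T_0,T]$; once that base case is in hand, everything else is a monotone bootstrap using that $\rho$ is nondecreasing together with the estimates of Lemmas \ref{lem1}--\ref{lem4}.

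\emph{Step 1: choice of $T_0$ and monotonicity of $(\varphi_j)_j$.} After the substitution $v=T-s$ one has $\varphi_2(t)=C_3\int_t^T\rho(\varphi_1(s))\,\mathrm{d}s=C_3\int_0^{T-t}\rho(C_4v)\,\mathrm{d}v$, so I would look at $h(\tau)\coloneqq C_4\tau-C_3\int_0^{\tau}\rho(C_4v)\,\mathrm{d}v$, which satisfies $h(0)=0$ and $h'(\tau)=C_4-C_3\rho(C_4\tau)$. Since $\rho$ is continuous with $\rho(0)=0$ and $C_4>0$, there is $\tau_0\in(0,T]$ with $h'>0$, hence $h\ge0$, on $[0,\tau_0]$; setting $T_0\coloneqq T-\tau_0$ gives $\varphi_2(t)\le\varphi_1(t)$ for all $t\in[T_0,T]$. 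If now $\varphi_j\le\varphi_{j-1}$ on $[T_0,T]$, then $\rho(\varphi_j)\le\rho(\varphi_{j-1})$ by monotonicity of $\rho$, so
\[
\varphi_{j+1}(t)=C_3\int_t^T\rho(\varphi_j(s))\,\mathrm{d}s\le C_3\int_t^T\rho(\varphi_{j-1}(s))\,\mathrm{d}s=\varphi_j(t),\qquad t\in[T_0,T].
\]
By induction the decreasing chain $\varphi_j\le\varphi_{j-1}\le\cdots\le\varphi_1$ holds on $[T_0,T]$ for every $j\ge1$, and each $\varphi_j$ is nonnegative since $\rho\ge0$ and $C_3,C_4\ge0$.

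\emph{Step 2: comparison $\tilde\varphi_{j,k}\le\varphi_j$.} The lower bound $\tilde\varphi_{j,k}\ge0$ is trivial, and I would obtain the upper bound by induction on $j$, uniformly in $k\ge1$ and for $t\in[T_0,T]$. For $j=1$, Lemma \ref{lem4} gives $\tilde\varphi_{1,k}(t)=\textbf{E}\sup_{t\le s\le T}\|x_{1+k}(s)-x_1(s)\|^2\le C_4(T-t)=\varphi_1(t)$. Assume $\tilde\varphi_{j-1,k}\le\varphi_{j-1}$ on $[T_0,T]$ for every $k$. Since $\tilde\varphi_{j-1,k}(s)=\textbf{E}\sup_{s\le r\le T}\|x_{j+k-1}(r)-x_{j-1}(r)\|^2$, Lemma \ref{lem3} gives $\tilde\varphi_{j,k}(t)\le C_3\int_t^T\rho\left(\tilde\varphi_{j-1,k}(s)\right)\mathrm{d}s$, and since $\rho$ is nondecreasing and $\tilde\varphi_{j-1,k}(s)\le\varphi_{j-1}(s)$ on $[T_0,T]$, the right-hand side is at most $C_3\int_t^T\rho(\varphi_{j-1}(s))\,\mathrm{d}s=\varphi_j(t)$. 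This closes the induction, and combining with Step 1 gives $0\le\tilde\varphi_{j,k}(t)\le\varphi_j(t)\le\varphi_{j-1}(t)\le\cdots\le\varphi_1(t)$ on $[T_0,T]$ for all $j,k\ge1$.

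The only genuinely delicate point I anticipate is Step 1 — securing $\varphi_2\le\varphi_1$ on a nonempty subinterval ending at $T$ — which is precisely where the absence of a global Lipschitz constant forces one to localize near the terminal time; it uses nothing beyond continuity of $\rho$ at the origin and $\rho(0)=0$. Notably the Osgood-type hypothesis $\int_{0+}\mathrm{d}u/\rho(u)=\infty$ in Assumption \ref{H3} is not needed for this lemma: it will enter only afterwards, when one shows $\varphi_j(t)\downarrow0$ as $j\to\infty$ and passes to the limit in the Picard scheme to conclude existence and uniqueness.
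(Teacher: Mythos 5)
Your proof is correct and follows the same overall strategy as the paper: a double induction in $j$ (uniform in $k$), with the base comparisons supplied by Lemmas \ref{lem4} and \ref{lem3}, the monotone bootstrap $\varphi_{j+1}\le\varphi_j$ driven by the nondecrease of $\rho$, and the whole argument localized to an interval $[T_0,T]$ on which $\varphi_2\le\varphi_1$. The one place you genuinely diverge is the choice of $T_0$: the paper extracts it explicitly from the linear growth bound $\rho(u)\le a+bu$ (reducing $\varphi_2\le\varphi_1$ to $T-t\le 2/(C_3(a/u+2b))$ with $u=C_1\exp(C_2T)$), whereas you argue qualitatively via $h(\tau)=C_4\tau-C_3\int_0^\tau\rho(C_4v)\,\mathrm{d}v$, $h(0)=0$, $h'(0)=C_4>0$. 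Your version is cleaner (the paper's chain of sufficient conditions here is in fact somewhat garbled, with an inconsistency between $2C_1$ and the $4C_1$ in the definition of $C_4$), but one caveat: your closing remark that the Osgood hypothesis $\int_{0+}\mathrm{d}u/\rho(u)=\infty$ is not needed is not quite right as stated. Continuity of $\rho$ at the origin is not among the listed hypotheses, and a concave nondecreasing $\rho$ with $\rho(0)=0$ and $\rho>0$ on $(0,\infty)$ can jump at $0$ (e.g.\ $\rho(u)=1+u$ for $u>0$); it is precisely the Osgood condition that excludes such a jump and hence licenses your claim that $h'>0$ near $0$. So either invoke the Osgood condition to get continuity at the origin, or fall back on the paper's route through $\rho(u)\le a+bu$. (A further triviality: $C_4>0$ requires $C_1>0$; the degenerate case $C_1=0$ makes every $\varphi_j$ and $\tilde\varphi_{j,k}$ vanish, so the lemma is vacuous there.)
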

\begin{proof}
	We prove this lemma by mathematical induction principle in $j$.
	
	By Lemma \ref{lem4}, we have
	\begin{equation*}
	\tilde{\varphi}_{1,k}(t)=\textbf{E}\sup_{t\leq s\leq T}\|x_{1+k}(s)-x_{1}(s)\|^{2}\leq C_{4}(T-t)=\varphi_{1}(t).
	\end{equation*}
	By Lemma \ref{lem3}, we get
	\begin{align*}
	\tilde{\varphi}_{2,k}(t)&=\textbf{E}\sup_{t\leq s\leq T}\|x_{2+k}(s)-x_{2}(s)\|^{2}\\
	&\leq C_{3}\int_{t}^{T}\rho\left( \textbf{E}\sup_{s\leq r\leq T}\|x_{1+k}(r)-x_{1}(r)\|^{2}\right)\mathrm{d}s\\
	&=C_{3}\int_{t}^{T}\rho\left(\tilde{\varphi}_{1,k}(s)\right) \mathrm{d}s\leq C_{3}\int_{t}^{T}\rho\left(\varphi_{1}(s)\right) \mathrm{d}s=\varphi_{2}(t).
	\end{align*}
	We have to prove that there exists $T_{0}>0$ such that for all $t \in [T_{0},T]$ the following inequality holds:
	\begin{equation}\label{phi2}
	\varphi_{2}(t)=C_{3}\int_{t}^{T}\rho\left( C_{4}(T-s)\right) \mathrm{d}s\leq C_{4}(T-t)=\varphi_{1}(t).
	\end{equation}
	To this end, note that this inequality provided that
	
	\begin{equation*}
	C_{3}\rho\left(C_{4}(T-t)\right)\leq C_{4}=C_{3}\rho\left( 2C_{1}\exp(C_{2}T)\right)
	\end{equation*} 
	or 
	\begin{equation*}
	C_{3}\rho\left(2C_{1}\exp(C_{2}T)\right)\leq 2C_{1}\exp(C_{2}T)=2u
	\end{equation*} 
	On the other hand, this holds if 
	\begin{equation*}
	C_{3}(a+2bu)(T-t)\leq 2u
	\end{equation*}
	Since $u=C_{1}\exp(C_{2}T)\geq C_{1}$ the above inequality holds if 
	
	\begin{equation}\label{T-t}
	T-t\leq \frac{2}{C_{3}(\frac{a}{u}+2b)}
	\end{equation}
	Thus, \eqref{phi2} holds true for any $t$ satisfying \eqref{T-t}.
	Obviously, such a $t$ does not depend on the value $\xi$. Thus, there exists $T_{0}>0$ such that
	
	\begin{equation*}
	\varphi_{2}(t)\leq \varphi_{1}(t)
	\end{equation*}
	for all $t \in [T_{0},T]$. Now we assume that (20) holds for some $n\geq 2$. Then using the same inequalities as above yields
	\begin{align*}
	\tilde{\varphi}_{j+1,k}(t)&\leq C_{3}\int_{t}^{T}\rho\left(\textbf{E}\sup_{s\leq r\leq T}\|x_{j+k}(r)-x_{j}(r)\|^{2}\right) \mathrm{d}s\\
	&\leq C_{3}\int_{t}^{T}\rho\left(\tilde{\varphi}_{j,k}(s)\right) \mathrm{d}s\leq  C_{3}\int_{t}^{T}\rho\left(\varphi_{j}(s)\right) \mathrm{d}s=\varphi_{j+1}(t)
	\end{align*}
	for all $t\in [T_{0},T]$. On the other hand, we have 
	\begin{equation*}
	\varphi_{j+1}(t)=C_{3}\int_{t}^{T}\rho\left( \varphi_{j}(s)\right) \mathrm{d}s\leq C_{3}\int_{t}^{T}\rho\left( \varphi_{j-1}(s)\right) \mathrm{d}s=\varphi_{j}(t) \quad \text{for all}\quad t\in [T_{0},T].
	\end{equation*}
	This completes the proof.
\end{proof}	

\begin{thm}
	Assume that Assumptions \ref{H1}- \ref{H3} hold. Then there exists a unique mild solution $(x,y)$ of \eqref{fbsde}.
\end{thm}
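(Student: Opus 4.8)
The plan is to build the solution as the limit of the Picard iterates $(x_j,y_j)$ of \eqref{9} on a short interval $[T_0,T]$ and then to patch these pieces together to cover $[0,T]$. First I would record that each iterate is well defined: given $x_{j-1}$, the pair $(x_j,y_j)$ solves a singular BSVIE whose free term still carries $y_j$ through $f(s,x_{j-1}(s),y_j(t,s))$, so I would combine Lemma \ref{lem1} with a contraction argument in the $y$-variable, where the constant $\varpi=1-8\frac{(2T)^{2\alpha}}{2\alpha-1}c>0$ of Assumption \ref{H11} is precisely what makes the relevant map a strict contraction on $\mathcal{L}_2^{\mathscr F}(\mathcal D,\mathcal L_2^0)$; this produces a unique $(x_j,y_j)\in M[0,T]$, and the a priori bounds of Lemma \ref{lem2} keep these iterates in a fixed bounded ball of $M[0,T]$.

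The heart of the argument is to show $\varphi_j(t)\to 0$ uniformly on $[T_0,T]$. Since Lemma \ref{lem5} gives $0\le\varphi_{j+1}(t)\le\varphi_j(t)\le\varphi_1(t)$ for $t\in[T_0,T]$, the sequence $\varphi_j$ decreases pointwise to some limit $\varphi(t)\ge 0$; by dominated convergence (with dominating function $\rho(\varphi_1(\cdot))\le a+b\varphi_1(\cdot)$, which is integrable) and continuity of $\rho$, passing to the limit in $\varphi_{j+1}(t)=C_3\int_t^T\rho(\varphi_j(s))\,\mathrm ds$ yields $\varphi(t)=C_3\int_t^T\rho(\varphi(s))\,\mathrm ds$. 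The function $\varphi$ is then continuous, nonincreasing, and $\varphi(T)=0$; because $\rho$ is concave, nondecreasing, $\rho(0)=0$ and $\int_{0+}\frac{\mathrm du}{\rho(u)}=\infty$, the Bihari--Osgood inequality forces $\varphi\equiv 0$ on $[T_0,T]$. Consequently $\sup_{k\ge 1}\tilde\varphi_{j,k}(t)\le\varphi_j(t)\to 0$, so $\{x_j\}$ is Cauchy in $\mathcal{L}_2^{\mathscr F}(\Omega,C([T_0,T],\mathcal H))$; denote the limit by $x$. Feeding $\textbf{E}\sup_{T_0\le s\le T}\|x_{j+k}(s)-x_j(s)\|^2\to 0$ back into the intermediate inequality established inside the proof of Lemma \ref{lem3} (which bounds $\varpi\,\textbf{E}\int_t^T\int_s^T\|y_{j+k}(s,u)-y_j(s,u)\|^2\,\mathrm du\,\mathrm ds$ by the same right-hand side), and invoking Assumption \ref{H11}, shows $\{y_j\}$ is Cauchy in $\mathcal{L}_2^{\mathscr F}(\mathcal D,\mathcal L_2^0)$ over $[T_0,T]$; denote the limit by $y$, so that $(x,y)\in M[T_0,T]$.

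Next I would pass to the limit $j\to\infty$ in \eqref{9} on $[T_0,T]$: using the bounds of Assumption \ref{H3}, the It\^o isometry, and the integrability of $(s-t)^{2\alpha-2}$ near $s=t$ (valid since $\alpha>\tfrac12$), the drift and stochastic integrals built from $(x_{j-1},y_j)$ converge in $L^2$ to those built from $(x,y)$, so $(x,y)$ is a mild solution of \eqref{fbsde} on $[T_0,T]$. Uniqueness on $[T_0,T]$ follows the same pattern: given two mild solutions $(x,y)$ and $(\bar x,\bar y)$, Lemma \ref{lem1} applied to their difference together with Assumption \ref{H3} yields $\textbf{E}\sup_{t\le s\le T}\|x(s)-\bar x(s)\|^2\le C_3\int_t^T\rho\big(\textbf{E}\sup_{s\le r\le T}\|x(r)-\bar x(r)\|^2\big)\,\mathrm ds$, and Bihari's inequality forces $x=\bar x$, whence $y=\bar y$.

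Finally, to globalise, I would use that the length $T-T_0$ produced in Lemma \ref{lem5} through \eqref{T-t} is bounded below by a constant depending only on $C_3,a,b$ and not on the terminal datum. Hence the construction can be restarted on $[T_1,T_0]$ with terminal value $x(T_0)\in\mathcal L_2(\Omega,\mathscr F_{T_0},\mathcal H)$, then on $[T_2,T_1]$, and so on; since the interval lengths are uniformly bounded below, finitely many steps exhaust $[0,T]$, and concatenating the pieces (which agree at the shared endpoints by uniqueness) produces the unique mild solution $(x,y)$ of \eqref{fbsde} on $[0,T]$. The step I expect to be the main obstacle is the identification $\varphi\equiv 0$ together with the subsequent limit passage in \eqref{9}: one must check that the monotone limit $\varphi$ retains enough regularity for the Bihari--Osgood dichotomy to apply, and that the simultaneous appearance of $y_j$ inside $f$ does not spoil the $L^2$-convergence of the stochastic integrals.
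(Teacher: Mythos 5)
Your proposal is correct and follows essentially the same route as the paper: Picard iteration \eqref{9}, the monotone comparison functions $\varphi_j$ from Lemma \ref{lem5}, passage to the limit in $\varphi_{j+1}(t)=C_3\int_t^T\rho(\varphi_j(s))\,\mathrm{d}s$ and the Osgood/Bihari condition to force $\varphi\equiv 0$, hence a Cauchy sequence in $M[T_0,T]$, with uniqueness via Lemma \ref{lem1} and Bihari's inequality, and globalization by iterating over subintervals of fixed length. The extra details you supply (well-posedness of each iterate by contraction in the $y$-variable, dominated convergence in the recursion, and extracting the Cauchy property of $\{y_j\}$ from the intermediate inequality of Lemma \ref{lem3}) are refinements of steps the paper leaves implicit, not a different method.
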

\begin{proof}[Uniqueness:]
	To show the uniqueness let both $(x,y)$  and $(\tilde{x},\tilde{y})$ be solutions of \eqref{fbsde}. Then Lemma \ref{lem1} implies that
	\begin{align*}
	&\textbf{E}\sup_{t\leq s\leq T}\|x(s)-\bar{x}(s)\|^{2}+\textbf{E} \int_{t}^{T}\int_{s}^{T}\|y(s,u)-\bar{y}(s,u)\|^{2}\mathrm{d}u\mathrm{d}s\\
	&\leq 16\frac{(2T)^{2\alpha}}{2\alpha-1} \textbf{E}\int_{t}^{T}\int_{s}^{T}\|f(s,x(s),y(s,u))-f(s,\bar{x}(s),\bar{y}(s,u))\|^{2}\mathrm{d}s\nonumber\\
	&+2\textbf{E} \int_{t}^{T}\int_{s}^{T}\|g(s,u,x(s))-g(s,u,\bar{x}(s))\|^{2}\mathrm{d}u\mathrm{d}s\\
	&\leq \left( 16 \frac{(2T)^{2\alpha}}{2\alpha-1}+2(T-t)\right)  \int_{t}^{T}\rho \left(\textbf{E}\sup_{s \leq r \leq T}\|x(r)-\bar{x}(r)\|^{2}\right)\mathrm{d}s\\
	&+16\frac{(2T)^{2\alpha}}{2\alpha-1}c\textbf{E} \int_{t}^{T}\int_{s}^{T}\|y(s,u)-\bar{y}(s,u)\|^{2}\mathrm{d}u\mathrm{d}s.
	\end{align*}
	For $t\leq s\leq T$, we have
	\begin{align*}
	\textbf{E}\sup_{t\leq s\leq T}\|x(s)-\bar{x}(s)\|^{2}+&\textbf{E} \int_{t}^{T}\int_{s}^{T}\|y(s,u)-\bar{y}(s,u)\|^{2}\mathrm{d}u\mathrm{d}s\leq C\int_{t}^{T}\rho\left(\textbf{E}\sup_{s\leq r\leq T}\|x(r)-\bar{x}(r)\|^{2} \right)\mathrm{d}s\\
	&+8\frac{(2T)^{2\alpha}}{2\alpha-1}c\textbf{E} \int_{t}^{T}\int_{s}^{T}\|y(s,u)-\bar{y}(s,u)\|^{2}\mathrm{d}u\mathrm{d}s.
	\end{align*}
	Let $1-8\frac{(2T)^{2\alpha}}{2\alpha-1}c>0$, then for any $0\leq t \leq T$, we have
	\begin{equation*}
	\textbf{E}\sup_{t\leq s\leq T}\|x(s)-\bar{x}(s)\|^{2}\leq C\int_{t}^{T}\rho\left(\textbf{E}\sup_{s\leq r\leq T}\|x(r)-\bar{x}(r)\|^{2} \right)\mathrm{d}s.
	\end{equation*}
	
	%From the comparison theorem of FDE, we have
	%For $t\leq s\leq T$, we have
	%\begin{equation*}
	%\textbf{E}\sup_{t\leq s\leq T}\|x(s)-\bar{x}(s)\|^{2}\leq r(t),
	%\end{equation*}
	%where $r(t)$ is the maximum left shift solution of the following equation:
	%\begin{equation*}
	%\begin{cases*}
	%\prescript{C}{t}{D^{\alpha}_{T}}=-\rho(t,u)\\
	%u(T)=0.
	%\end{cases*}
	%\end{equation*}
	%In view of  Assumption \ref{H3}, we know that $r(t)=0$, for $t\in [0,T]$. So we obtain
	Therefore, by Bihari's inequality we obtain
	\begin{equation*}
	\textbf{E}\sup_{t\leq s\leq T}\|x(s)-\tilde{x}(s)\|^{2}=0.
	\end{equation*}
	So $x(t)=\bar{x}(t)$ for all $0\leq t\leq T$ almost surely. It then follows from \eqref{ineq1} that $y(t,s)=\bar{y}(t,s)$ for all $(t,s)\in \mathcal{D}$ almost surely as well. This establishes the uniqueness.
	
	\textit{Existence:} We claim that
	\begin{equation}\label{goesto0}
	\textbf{E}\sup_{t\leq s\leq T}\|x_{j+k}(s)-x_{j}(s)\|^{2}\to 0,\quad \text{for all} \quad T_{0}\leq t\leq T, \quad \text{as} \quad j,k\to \infty.
	\end{equation}
	Note that by definition , $\varphi_{j}$ is continuous on $[T_{0},T]$ and also for each $n\geq 1$, $\varphi_{j}(\cdot)$ is decreasing on $[T_{0},T]$ and for each $\varphi_{j}(t)$ is a nonincreasing sequence. Therefore, we can define the function $\varphi(t)$ by $\varphi_{j}(t)\downarrow \varphi(t)$. It is easy to verify that $\varphi(t)$ is continuous and nonincreasing on $[T_{0},T]$. By definition of $\varphi_{j}(t)$ and $\varphi$(t), we get
	\begin{equation*}
	\varphi(t)= \lim\limits_{j\to \infty}C_{3}\int_{t}^{T}\rho(\varphi_{j}(s))\mathrm{d}s=C_{3}\int_{t}^{T}\rho(\varphi(s))\mathrm{d}s
	\end{equation*}
	for each $t\in [T_{0},T]$. Since
	\begin{equation*}
	\int_{0+}\frac{\mathrm{d}u}{\rho(u)}=\infty,
	\end{equation*}
	By virtue of Assumption \ref{H3}, $\varphi(t)=0$ for all  $t\in [T_{0},T]$. As a consequence, $\lim\limits_{j\to \infty}\varphi_{j}(T_{0})=0$. By Lemma \ref{lem5}
	\begin{align*}
	\textbf{E}\sup_{t\leq s\leq T}\|x_{j+k}(s)-x_{j}(s)\|^{2}&\leq \sup_{T_{0}\leq t\leq T}\tilde{\varphi}_{j,k}(t)\\
	&\leq \sup_{T_{0}\leq t\leq T}\tilde{\varphi}_{j}(t)=\varphi_{j}(T_{0})\to 0, \quad \text{as} \quad j\to \infty.
	\end{align*}
	So \eqref{goesto0} must hold. Applying \eqref{goesto0} to \eqref{19}, we see that $\left\lbrace x_{j},y_{j} \right\rbrace $ is a Cauchy sequence (hence convergent) in $M[T_{0},T]$ and its limit is denoted by $(x,y)$. Now letting $j\to \infty$ in \eqref{9}, we obtain 
	\begin{align*}
	x(t)=\xi &+\int_{t}^{T}(s-t)^{\alpha-1}f(t,s,x(s),y(t,s))\mathrm{d}s\\
	&+\int_{t}^{T}(s-t)^{\alpha-1}\left[g(t,s,x(s))+y(t,s) \right] \mathrm{d}w(s),
	\end{align*}
	on $[T_{0},T]$. Since the value of $T_{0}$ depends only on the function $\rho$, one can deduce by iteration the existence on $[T-q(T-T_{0}),T]$ for each $q$, and therefore the existence on the entire interval $[0,T]$.
\end{proof}

\section{Conclusions and future works}\label{open problems}
In this paper, we first formulated new problem in BSDE theory which is singular backward stochastic nonlinear Volterra differential equation, is an untreated topic in recent literature. To derive an adapted pair of stochastic processes, we first formulated fundamental lemma which plays a crucial role in the theory of singular BSDE. The main results in our paper were to show  existence and uniqueness of an adapted solution to \eqref{fbsde} in infinite dimensional setting using  Carath\'{e}odory-type condition. In doing so, we constructed Picard type approximation. The key point in the proof of main results was to apply extended martingale representation theorem and Bihari's inequality. 

Since our results are sufficiently new in the theory of BSDEs, there are still open problems to discuss regarding their applications to finance and optimal control theory using stochastic maximum principle.
\section*{Acknowledgment}
On behalf of all authors, the corresponding author states that there is no conflict of interest.

\end{document}